\DeclareMathOperator{\GL}{GL}
\DeclareMathOperator{\SL}{SL}
\DeclareMathOperator{\diam}{diam}
\DeclareMathOperator{\sgn}{sgn}
\DeclareMathOperator{\tr}{tr}
\DeclareMathOperator{\centr}{\mathcal{Z}}
\DeclareMathOperator{\centraliser}{\mathcal{C}}
\DeclareMathOperator{\M}{M}
\newcommand{\Z}{\mathbb{Z}}
\newcommand{\A}{s}
\newcommand{\B}{t}
\newcommand{\Cg}{\Gamma}
\newcommand{\C}{\centraliser}
\DeclareMathOperator{\PSL}{PSL}
\newtheorem{theorem}{Theorem}[section]
\newtheorem{lemma}[theorem]{Lemma}
\newtheorem{corollary}[theorem]{Corollary}
\title{The diameters of commuting graphs of linear groups and matrix rings over the integers modulo~$m$}
\author{Michael Giudici and Aedan Pope\footnote{This work was completed while the second author was an Australian Mathematical Sciences Institute Vacation Scholar. The first author holds an Australian Research Fellowship.} \\
        School of Mathematics and Statistics\\ 
        The University of Western Australia\\ 
        35 Stirling Highway\\
		Crawley WA 6009\\
		giudici@maths.uwa.edu.au, popea02@student.uwa.edu.au}
\date{}
\begin{document}

\maketitle

\begin{abstract}
The commuting graph of a group $G$, denoted by $\Cg(G)$, is the simple undirected graph whose vertices are the non-central elements of $G$ and two distinct vertices are adjacent if and only if they commute. Let $\Z_m$ be the commutative ring of equivalence classes of integers modulo $m$. In this paper we investigate the connectivity and diameters of the commuting graphs of $\GL(n,\Z_m)$ to contribute to the conjecture that there is a universal upper bound on $\diam(\Cg(G))$ for any finite group $G$ when $\Cg(G)$ is connected. For any composite $m$, it is shown that $\Cg(\GL(n,\Z_m))$ and $\Cg(\M(n,\Z_m))$ are connected and $\diam(\Cg(\GL(n,\Z_m)))=\diam(\Cg(\M(n,\Z_m)))=3$. For $m$ a prime, the instances of connectedness and absolute bounds on the diameters of $\Cg(\GL(n,\Z_m))$ and $\Cg(\M(n,\Z_m))$ when they are connected are concluded from previous results.
\end{abstract}

\section{Introduction}

For a group $G$, we denote the \emph{center} of $G$ by $\centr(G)$ and $\centr(G)=\{x\in G | xy=yx~\forall y \in G \}$. If $x$ is an element of $G$, then $\C_G(x)$ denotes the \emph{centraliser} of $x$ in $G$ and $\C_G(x)=\{y\in G|xy=yx\}$. The \emph{commuting graph} of a group, denoted by $\Cg(G)$, is the simple undirected graph whose vertices are the non-central elements of $G$ and two distinct vertices $x$ and $y$ are adjacent if and only if $xy=yx$. We take analogous definitons for the center, centraliser and commuting graph of a ring $R$. A \emph{path} in a graph is an ordered list $a_1, a_2, \ldots , a_k$ of vertices where there is an edge in the graph from $a_i$ to $a_{i+1}$ for all $i$; the path is said to between $a_1$ and $a_k$ and of length $k-1$. A graph is \emph{connected} if and only if there exists a path between any two distinct vertices in the graph. The \emph{distance} between two vertices of a graph, say $x$ and $y$, is the length of the shortest path between $x$ and $y$ in the graph if such a path exists and is $\infty$ otherwise; this is denoted $\operatorname{d}(x,y)$. The \emph{diameter} of a graph $\Gamma$ is the maximum distance between any two vertices in the graph, and is denoted $\diam(\Gamma)=\max\{\operatorname{d}(x,y)| x,y \in \Gamma\}$. We use $\M(n,R)$ to denote the ring of all $n \times n$ matrices over the ring $R$, $\GL(n,R)$ to denote the group all invertible $n \times n$ matrices over $R$ and $\SL(n,R)$ to denote those with determinant 1. $\Z_m$ denotes the commutative ring of equivalence classes of integers modulo $m$. 

The commuting graphs of groups have been studied heavily, for example in \cite{brauer, iranmanesh, segev, segev2}, and those of rings in \cite{akbari2, akbari3, akbari}. In \cite{iranmanesh}, Iranmanesh and Jafarzadeh conjecture that there is a universal upper bound on the diameter of a connected commuting graph for any finite nonabelian group. They then determine when the commuting graph of a symmetric or alternating group is connected and that the diameter is at most 5 in this case. The paper \cite{segev} proves that for all finite classical simple groups over a field of size at least 5, when the commuting graph of a group is connected then its diameter is at most 10. Previous research into the diameters of the commuting graphs of linear groups and matrix rings has primarily covered these over fields. For a field $F$, the authors of \cite{akbari} show that $\diam(\Cg(\GL(n,F))) \leq \diam(\Cg(\M(n,F))) \leq 6$ when these graphs are connected and $|F|$ is greater than or equal to $3$. In addition, \cite{akbari2} provides necessary and sufficient conditions for $\Cg(\SL(n,F))$ to be connected; an upper bound on the diameter of $\Cg(\SL(n,F))$ can be generated from the proof. Our paper adds to this body of evidence supporting the conjecture by calculating the diameter of the commuting graphs of some general linear groups over commutative rings that are not fields. The diameters of the corresponding matrix rings are also calculated. 

\begin{theorem}
\label{composite}
Let $m$ be a composite natural number and $n \geq 2$. Then $\Cg(\GL(n,\Z_m))$ and $\Cg(\M(n,\Z_m))$ are connected and $\diam(\Cg(\GL(n,\Z_m)))=\diam(\Cg(\M(n,\Z_m)))=3$.
\end{theorem}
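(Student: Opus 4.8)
The plan is to prove both equalities simultaneously by showing each graph is connected with every distance at most $3$ (the upper bound, which also gives connectivity) and then exhibiting a single pair of vertices at distance exactly $3$ (the lower bound). For the upper bound in the matrix ring the engine is a factorisation of the modulus: since $m$ is composite, write $m=ab$ with $1<a,b<m$. Then for \emph{any} $A,B\in\M(n,\Z_m)$ we have $(aA)(bB)=ab\,AB\equiv 0\equiv ab\,BA=(bB)(aA)$, so $aA$ and $bB$ always commute, whether or not $A$ and $B$ do; moreover $A$ commutes with $aA$ and $B$ commutes with $bB$. Hence, provided the scalar multiples $aA$ and $bB$ are non-central, the list $A,\,aA,\,bB,\,B$ is a path of length $3$ in $\Cg(\M(n,\Z_m))$ and $\operatorname{d}(A,B)\le 3$ (shorter if consecutive vertices coincide, which is exactly what happens when some multiple is trivial). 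Since the central elements of $\M(n,\Z_m)$ are precisely the scalar matrices, $aA$ is central exactly when $a$ annihilates every off-diagonal entry of $A$ and every difference of its diagonal entries; by choosing the divisor $a$ appropriately one can make $aA$ non-central for every $A$ except those congruent to a scalar matrix modulo a maximal zero-divisor.

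These exceptional matrices are precisely the ones with a very large centraliser, which is what lets us treat them by hand. If $A=\lambda I+dC$ with $d=m/p$ for a prime $p\mid m$, then writing $\pi_p$ for reduction modulo $p$, a matrix $Y$ commutes with $A$ if and only if $\pi_p(Y)$ commutes with $\pi_p(C)$ in $\M(n,\Z_p)$, so $\C_{\M(n,\Z_m)}(A)=\pi_p^{-1}\!\left(\C_{\M(n,\Z_p)}(\pi_p(C))\right)$. For such $A$ I would connect $A$ to an arbitrary $B$ by selecting a non-central element of this preimage (for instance one whose reduction modulo $p$ is scalar) which also lies in $\C(bB)$, again invoking the vanishing product to force commuting. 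I expect the bookkeeping for these near-scalar matrices---arranging that the intermediate vertices are simultaneously non-central and correctly placed---to be the main technical obstacle of the entire argument.

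Two modifications complete the proof for the group. In $\GL(n,\Z_m)$ the intermediate vertices must be invertible, so I replace $aA,bB$ by unipotent translates: for $a,b$ with $ab\equiv 0$ one has $(I+aM)(I+bN)-(I+bN)(I+aM)=ab(MN-NM)=0$, so such translates always commute, and choosing $a,b$ divisible by every prime dividing $m$ makes $I+aM\equiv I$ modulo each prime, hence invertible. When $m$ is square-free this divisibility is only possible for $a,b\equiv 0$, so there I instead pass through the Chinese Remainder isomorphism $\GL(n,\Z_m)\cong\prod_i\GL(n,\Z_{p_i})$ and build the connecting vertices coordinatewise, exploiting that scalars commute with everything: placing the data of $A$ and identity blocks in complementary coordinates---and using a spare coordinate carrying a fixed non-scalar matrix in the case where $A$ and $B$ are supported on the same single factor---produces commuting non-central $X\in\C(A)$ and $Y\in\C(B)$, and hence a path of length at most $3$.

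Finally, for the lower bound I must exhibit a non-adjacent pair of non-central elements whose centralisers meet only in the centre. For general $n$ I would take a regular (cyclic) matrix $A$, for instance the companion matrix of a monic polynomial that remains separable modulo every prime dividing $m$, together with $B=A^{\top}$; then $\C(A)=\Z_m[A]$ and $\C(B)=\Z_m[A]^{\top}$, whose intersection consists only of scalar matrices, while $AA^{\top}\neq A^{\top}A$ ensures non-adjacency, forcing $\operatorname{d}(A,B)=3$. Taking the constant term of the polynomial coprime to $m$ keeps $A$ (and hence $B$) inside $\GL(n,\Z_m)$, so the same pair serves for the group. Combining the upper and lower bounds gives $\diam(\Cg(\GL(n,\Z_m)))=\diam(\Cg(\M(n,\Z_m)))=3$.
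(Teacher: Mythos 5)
Your overall strategy is the same as the paper's: build length-$3$ paths $A\sim A'\sim B'\sim B$ in which $A'$ and $B'$ lie (up to a scalar shift) in complementary zero-divisor ideals $a\M(n,\Z_m)$ and $b\M(n,\Z_m)$ with $ab\equiv 0\pmod m$, so that the middle pair commutes automatically, and obtain the lower bound from a pair of vertices whose centralisers meet only in the scalars. However, two steps you rely on are not actually established. First, the exceptional case you yourself flag as ``the main technical obstacle'' is a genuine gap as written. If you let the divisor $a$ vary with $A$ (as ``choosing the divisor $a$ appropriately'' suggests), the middle edge needs $a_Ab_B\equiv 0\pmod m$, which fails for independent choices (e.g.\ $m=pq$ with both endpoints forced to use $q$ gives $q^2\not\equiv 0$, and then $q^2(AB-BA)$ need not vanish). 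If instead you fix $m=ab$ once and for all, the exceptional set is nonempty (e.g.\ $A=bE_{1,2}$ is a non-central vertex with $aA=0$), and your sketch for such $A$ stops short of producing a neighbour that provably commutes with the neighbour chosen for $B$. The clean resolution---which is exactly what the paper's Lemmas~\ref{aprimepower} and~\ref{amulticomp} carry out---is to fix the factorisation and allow the neighbour of $A$ to be $aY+kI$ for an \emph{arbitrary} $Y$, not just $Y=A$: when $aA$ is central one has $A=\lambda I+bC$, so \emph{every} element of $a\M(n,\Z_m)+\Z_m I$ (for instance $aE_{1,1}+I$) commutes with $A$ because $ab\equiv 0$, and any such element still commutes with any $bW+\ell I$. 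Your $\pi_p$-preimage description points in this direction, but you never verify the middle edge. The same uniformity also disposes of the group case without your detour through unipotent translates and a separate CRT construction for square-free $m$: the paper takes neighbours of the form $aY+bI$ with $Y$ invertible, which are invertible by the identity $\det(aX+bY)=a^n\det X+b^n\det Y$ of Lemma~\ref{dets}.

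Second, your lower-bound witness is not guaranteed to work as specified: ``companion matrix of a monic polynomial that remains separable modulo every prime dividing $m$, with constant term coprime to $m$'' admits symmetric examples, e.g.\ the companion matrix of $x^2-1$ over $\Z_{15}$, for which $A=A^{T}$ and hence $\C(A)\cap\C(A^{T})=\C(A)\neq\centr(\M(2,\Z_{15}))$. Separability plus a unit constant term does not force $\Z_m[A]\cap\Z_m[A^{T}]$ to consist of scalars; you need an additional condition on the polynomial or a direct verification for a concrete matrix. The paper sidesteps this by taking $P=I+N$ with $N$ the regular nilpotent Jordan block (whose characteristic polynomial is wildly inseparable) and computing $\C_{\M(n,\Z_m)}(P)\cap\C_{\M(n,\Z_m)}(P^{T})=\centr(\M(n,\Z_m))$ entry by entry. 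With these two repairs your outline becomes a correct proof whose mechanism is essentially identical to the paper's.
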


Consider when $m$ is a prime. If $m \geq 3$ and $n \geq 2$, by \cite[Corollaries 7 and 11]{akbari2} and \cite[Theorems 14 and 17]{akbari}, $\Cg(\GL(n,\Z_m))$ and $\Cg(\M(n,\Z_m))$ are connected if and only if $n$ is not prime and $4 \leq \diam(\Cg(\GL(n,\Z_m))) \leq \Cg(\M(n,\Z_m)) \leq 6$ in this case. In the case of $m=2$ and $n \geq 2$, \cite[Corollary 7]{akbari2} shows that $\Cg(\M(n,\Z_2))$ is connected if and only if $n$ is not a prime number and \cite[Corollary 14]{akbari2} gives that $\Cg(\GL(n,\Z_2))$ is connected if and only if $n$ and $n-1$ are not prime numbers. Moreover, $\diam(\Cg(\M(n,\Z_2))) \leq 6$ when it is connected \cite[Theorem 17]{akbari}. By modifying the proof of \cite[Theorem 12.5]{segev}, one can conclude that, for $n \geq 5$, an arbitrary element of $\GL(n,\Z_2)=\PSL(n,\Z_2)$ is distance at most 3 from a transvection of $\GL(n,\Z_2)$ and that the distance between any two transvections is at most 2. Thus for $n\geq 5, \diam(\Cg(\GL(n,\Z_2))) \leq 8$; if $n < 5$ then $n$ or $n-1$ is prime and the graph is disconnected from above. Therefore, for any integers $m$ and $n$ that are greater than $1$, when the corresponding commuting graphs are connected there is a universal upper bound on $\diam(\Cg(\GL(n,\Z_m)))$ and $\Cg(\M(n,\Z_m))$.

We record here that we have used the \textsc{Magma}\cite{magma} implementation of the small group database \cite{smallgroups, millennium} to calculate the connectedness and diameters of the commuting graphs for all groups of order at most 2000, except for the orders of the form $k2^6$ for $k\neq 9, 4$ composite. For connected graphs, the largest diameter found was $6$.

We do not know if the bound of 8 for the diameter of $\GL(n,2)$ is sharp. No example of a connected commuting graph of a group with diameter greater than 6 was found in previous literature and so it would be interesting to find examples of diameter greater than 6.

\section{Results}
We begin with the following result pertaining to commutative graphs which is a generalisation of the disconnectedness of $\Cg(\M(2,F))$ for $F$ a field, concluded in \cite[Remark 9]{akbari3}

\begin{theorem}
\label{integraldomain}
If R is an integral domain, then $\Cg(\M(2,R))$, $\Cg(\GL(2,R))$ and $\Cg(\SL(2,R))$ are disconnected.
\end{theorem}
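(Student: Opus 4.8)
The plan is to reduce the three claims to the classical field case by passing to the field of fractions $K$ of the integral domain $R$. Under the inclusion $R \hookrightarrow K$, two matrices in $\M(2,R)$ commute if and only if they commute in $\M(2,K)$ (their commutator has entries in $R$, and $R \to K$ is injective), and a matrix with entries in $R$ is scalar over $R$ exactly when it is scalar over $K$. Since the centre of $\M(2,R)$ is $R\cdot I$ and that of $\M(2,K)$ is $K\cdot I$, the vertices of $\Cg(\M(2,R))$ are precisely the non-scalar matrices over $R$, and $\Cg(\M(2,R))$ is the subgraph of $\Cg(\M(2,K))$ induced on them. The same holds for $\GL$ and $\SL$: their centres consist of scalar matrices, so their non-central elements are exactly their non-scalar elements, and any path in $\Cg(\GL(2,R))$ or $\Cg(\SL(2,R))$ is in particular a walk through non-scalar matrices of $\M(2,K)$ whose consecutive terms commute.

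The heart of the proof, and the step I expect to be the main obstacle, is the following fact over the field $K$: if $A \in \M(2,K)$ is non-scalar, then $\C(A) = K[A] := \{\alpha I + \beta A : \alpha, \beta \in K\}$. This is because a non-scalar $2 \times 2$ matrix has minimal polynomial of degree $2$ and is therefore cyclic, and the centraliser of a cyclic matrix equals the algebra it generates; alternatively a short direct computation with a general $2\times 2$ matrix establishes it. Granting this, if a non-scalar $B$ commutes with $A$ then $B = \alpha I + \beta A$ with $\beta \neq 0$, so $A \in K[B]$ and hence $\C(A) = \C(B)$. Thus commuting is transitive on non-scalar matrices over $K$: along any walk $A_0, A_1, \dots, A_k$ of non-scalar matrices with $A_i$ commuting with $A_{i+1}$ the centralisers all coincide, and in particular $A_0$ commutes with $A_k$.

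Combining the two steps, any path in $\Cg(\M(2,R))$, $\Cg(\GL(2,R))$ or $\Cg(\SL(2,R))$ between two vertices forces those vertices to commute in $\M(2,K)$, and hence in $\M(2,R)$. It therefore suffices to exhibit two non-commuting non-scalar matrices lying in $\SL(2,R)$; they must then lie in distinct connected components of each of the three graphs. I would take the transvections $A = \bigl(\begin{smallmatrix}1&1\\0&1\end{smallmatrix}\bigr)$ and $B = \bigl(\begin{smallmatrix}1&0\\1&1\end{smallmatrix}\bigr)$, which have determinant $1$, so $A, B \in \SL(2,R) \subseteq \GL(2,R) \subseteq \M(2,R)$, are non-scalar (hence genuine vertices in all three graphs), and satisfy $AB - BA = \bigl(\begin{smallmatrix}1&0\\0&-1\end{smallmatrix}\bigr) \neq 0$ because $1 \neq 0$ in the domain $R$. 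This gives the claimed disconnectedness of all three commuting graphs.
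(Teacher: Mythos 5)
Your proposal is correct, but it reaches the conclusion by a genuinely different route from the paper. The paper argues directly inside $\M(2,R)$: it fixes the set $\mathcal{A}$ of matrices $\left[\begin{smallmatrix} a_1 & a_2 \\ 0 & a_1\end{smallmatrix}\right]$ with $a_2\neq 0$ and computes, using only the absence of zero divisors and cancellation, that any non-central matrix commuting with an element of $\mathcal{A}$ again lies in $\mathcal{A}$; thus $\mathcal{A}$ is an isolated component and the same two transvections you use witness disconnectedness. You instead embed $R$ in its field of fractions $K$, observe that all three graphs are induced subgraphs of $\Cg(\M(2,K))$ on non-scalar matrices, and invoke the standard fact that a non-scalar $2\times 2$ matrix over a field is non-derogatory, so its centraliser is $K[A]$; from this you deduce that commuting is transitive on non-scalar matrices, so the endpoints of any path must themselves commute. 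Each step of yours is sound (including the trivial but necessary point that non-central elements of $\GL$ and $\SL$ are non-scalar, and that your two transvections are non-central because they fail to commute with each other). Your argument buys more: it shows every connected component of each of the three graphs is a clique, and it makes the reduction to the known field case (\cite[Remark 9]{akbari3}) explicit. What the paper's computation buys is self-containment: it never leaves $\M(2,R)$, needs no field of fractions and no centraliser theory, and the same explicit calculation is reused later in the paper (in the lemma on the matrix $P$ and its transpose), so the authors get the component structure essentially for free from machinery they need anyway.
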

\begin{proof}
Let $R$ be an integral domain.\\
Let $\mathcal{A}=\left\{ \begin{array}{c|c} \begin{bmatrix} a_1 & a_2 \\ 0 & a_1 \end{bmatrix} \in \M(2,R) & 
a_1, a_2 \in R, a_2 \neq 0 \end{array} \right\} \subseteq \M(2,R) \setminus \centr(\M(2,R))$.
Let $A =\begin{bmatrix} a_1 & a_2 \\ 0 & a_1 \end{bmatrix}\in \mathcal{A}$  for some $a_i \in R$ and let $X = \begin{bmatrix} x_1 & x_2 \\ x_3 & x_4 \end{bmatrix} \in \C_{\M(2,R)}(A)\setminus \centr(\M(2,R))$. Then
\begin{align*}
XA&=\begin{bmatrix} x_1 & x_1 \\ x_3 & x_4 \end{bmatrix}\begin{bmatrix} a_1 & a_2 \\ 0 & a_1 \end{bmatrix}
=\begin{bmatrix} a_1x_1 & a_2x_1+a_1x_2 \\ a_1x_3 & a_2x_3+a_1x_4 \end{bmatrix}\\
AX&=\begin{bmatrix} a_1 & a_2 \\ 0 & a_1 \end{bmatrix}\begin{bmatrix} x_1 & x_1 \\ x_3 & x_4 \end{bmatrix}
=\begin{bmatrix} a_1x_1+a_2x_3 & a_1x_2+a_2x_4 \\ a_1x_3 & a_1x_4 \end{bmatrix}
\end{align*}
Since $XA=AX$, this yields $a_1x_1=a_1x_1+a_2x_3$ and $a_2x_1+a_1x_2=a_1x_2+a_2x_4$. Now $a_1x_1=a_1x_1+a_2x_3$ implies $a_2x_3 = 0$. Since $a_2 \neq 0$ and there are no zero divisors in $R$, $x_3 = 0$. Moreover, $a_2x_1+a_1x_2=a_1x_2+a_2x_4$ implies $a_2x_1=a_2x_4$ which yields $x_1=x_4$ by cancelling $a_2$ in the integral domain. Thus $X = \begin{bmatrix} x_1 & x_2 \\ 0 & x_1 \end{bmatrix}$. As $X \notin \centr(\M(2,R))$, it must not be a scalar matrix and so $x_2 \neq 0$. Therefore $X \in \mathcal{A}$. So in $\Cg(\M(2,R))$, $\mathcal{A}$ forms an isolated connected component. Thus the matrices $B=\begin{bmatrix} 1 & 1 \\ 0 & 1 \end{bmatrix} \in \mathcal{A} \cap \SL(2,R)$ and $C=\begin{bmatrix} 1 & 0 \\ 1 & 1 \end{bmatrix} \in \SL(2,R) \setminus \centr(\SL(2,R))$ with $C \notin \mathcal{A}$ are in different connected components of $\Cg(\M(2,R))$, $\Cg(\GL(2,R))$ and $\Cg(\SL(2,R))$. Therefore the graphs $\Cg(\M(2,R))$, $\Cg(\GL(2,R))$ and $\Cg(\SL(2,R))$ are disconnected.
\end{proof}

The following are some useful lemmas on the properties of the ring of integers modulo $m$.

\begin{lemma}
\label{units}
Let u,v,s,t be pairwise coprime integers. Then for any natural numbers k,l, the two integers $us^k+vt^l$ and st are coprime.
\end{lemma}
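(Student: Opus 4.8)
The plan is to reduce the claim to two separate coprimality statements and then argue prime by prime. Since $\gcd(N,st)=1$ holds precisely when $N$ is coprime to $s$ and coprime to $t$, it suffices to show that $N:=us^k+vt^l$ satisfies $\gcd(N,s)=1$ and $\gcd(N,t)=1$.

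To handle the factor $s$, I would take an arbitrary prime $p$ dividing $s$. As $k\geq 1$ we have $p\mid s^k$, so $N\equiv vt^l \pmod{p}$. The pairwise-coprimality hypothesis is exactly what makes the remaining term a unit modulo $p$: from $\gcd(s,v)=1$ and $\gcd(s,t)=1$ the prime $p$ divides neither $v$ nor $t$, hence $p\nmid vt^l$ and therefore $p\nmid N$. Since $p$ was an arbitrary prime divisor of $s$, this gives $\gcd(N,s)=1$.

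The factor $t$ is treated symmetrically: for a prime $q\mid t$ we have $q\mid t^l$, so $N\equiv us^k\pmod{q}$, and $\gcd(t,u)=1$ together with $\gcd(t,s)=1$ forces $q\nmid us^k$. Hence $\gcd(N,t)=1$, and combining the two reductions yields $\gcd(N,st)=1$, as required.

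There is no substantial obstacle here; the argument is elementary. The only point requiring a little care is invoking the correct parts of the hypothesis — for coprimality with $s$ one needs $s$ coprime to both $v$ and $t$, and for coprimality with $t$ one needs $t$ coprime to both $u$ and $s$. Notice that $\gcd(u,s)$, $\gcd(v,t)$ and $\gcd(u,v)$ are not actually used, so the full pairwise coprimality is slightly stronger than what the argument requires; this is harmless, since the lemma will presumably be applied in a situation where all four integers are already pairwise coprime.
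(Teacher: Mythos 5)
Your proof is correct and follows essentially the same route as the paper's: both arguments take an arbitrary prime $p$ dividing $st$, note that it divides exactly one of the two summands $us^k$, $vt^l$ (using pairwise coprimality to rule out the other), and conclude $p\nmid us^k+vt^l$. The only cosmetic difference is that you argue directly by splitting $\gcd(N,st)=1$ into $\gcd(N,s)=1$ and $\gcd(N,t)=1$, whereas the paper runs the same prime-divisor argument as a proof by contradiction on $\gcd(N,st)$.
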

\begin{proof}
Let $u,v,s,t$ be pairwise coprime integers and $k, l$ be natural numbers. Let $d=\gcd(us^k+vt^l, st)$. Assume that $d \neq 1$ and let $p$ be a prime that divides $d$. Then $p|st$ and since $p$ is prime, $p|s$ or $p|t$. Without loss of generality, assume $p|s$, and thus $p|us^k$. As $v,s,t$ are pairwise coprime, $p|s$ implies that $p\nmid v$ and $p\nmid  t$, thus $p \nmid vt^m$. So $p \nmid us^k+vt^k$, a contradiction. Therefore $d=1$ and $us^k+vt^l, st$ are coprime.
\end{proof}

\begin{lemma}
\label{dets}
For coprime natural numbers $s$ and $t$ greater than 1, if $X$ and $Y$ are elements of $\GL(n,\Z_{st})$ then $sX+tY$ is also an element of $\GL(n,\Z_{st})$.
\end{lemma}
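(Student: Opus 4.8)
The plan is to reduce the claim to a statement about determinants, since $sX+tY\in\GL(n,\Z_{st})$ precisely when $\det(sX+tY)$ is a unit of $\Z_{st}$, equivalently when $\gcd(\det(sX+tY),st)=1$. Because $s$ and $t$ are coprime, this holds if and only if $\det(sX+tY)$ is coprime to $s$ and coprime to $t$ separately, so it suffices to analyse the situation modulo $s$ and modulo $t$ in turn. This is really just the observation that a matrix over $\Z_{st}$ is invertible if and only if its reductions over $\Z_s$ and over $\Z_t$ are both invertible.

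The cleanest route works directly with the matrix rather than its determinant. Reducing modulo $s$ gives $sX+tY\equiv tY\pmod{s}$; here $t$ is a unit modulo $s$ since $\gcd(s,t)=1$, and $Y$ remains invertible modulo $s$ because the reduction homomorphism $\Z_{st}\to\Z_s$ carries invertible matrices to invertible matrices. Hence $tY$, and therefore $sX+tY$, is invertible modulo $s$, so $\det(sX+tY)$ is coprime to $s$. The symmetric argument modulo $t$, using $sX+tY\equiv sX\pmod{t}$ with $s$ a unit modulo $t$ and $X$ invertible modulo $t$, shows $\det(sX+tY)$ is coprime to $t$. Combining the two gives coprimality with $st$, and the result follows.

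Alternatively, one can feed Lemma~\ref{units} directly. The key computation is that $\det(sX+tY)\equiv s^n\det(X)+t^n\det(Y)\pmod{st}$: expanding the determinant by multilinearity in the columns, where each column of $sX+tY$ is $s$ times a column of $X$ plus $t$ times a column of $Y$, the term taking the $X$-column in every slot contributes $s^n\det(X)$, the all-$Y$ term contributes $t^n\det(Y)$, and every mixed term carries both a factor of $s$ and a factor of $t$, hence is divisible by $st$. Setting $u=\det(X)$ and $v=\det(Y)$, which are units modulo $st$ and so coprime to both $s$ and $t$, the quantity $s^nu+t^nv$ is coprime to $st$ by Lemma~\ref{units} with $k=l=n$, and the congruence above then finishes the proof.

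The main point to watch is that Lemma~\ref{units} is stated for $u,v,s,t$ \emph{pairwise} coprime, whereas in the application $u=\det(X)$ and $v=\det(Y)$ need not be coprime to one another. This is not a genuine obstacle: inspecting the proof of Lemma~\ref{units}, the hypothesis $\gcd(u,v)=1$ is never used, and only $\gcd(s,t)=1$, $\gcd(s,v)=1$ and $\gcd(t,u)=1$ are actually invoked, all of which hold here. I would therefore either cite the lemma together with this remark or, to sidestep the determinant bookkeeping and the coprimality subtlety entirely, write up the reduction argument of the second paragraph, which is the version I would ultimately prefer.
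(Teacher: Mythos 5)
Your proposal is correct. Your preferred argument --- checking invertibility of $sX+tY$ by reducing modulo $s$ and modulo $t$ separately, where $sX+tY\equiv tY\pmod{s}$ and $sX+tY\equiv sX\pmod{t}$ --- is genuinely different from the paper's proof, which instead computes $\det(sX+tY)$ directly via the Leibniz formula over $\Z_{st}$: every cross term in the expansion carries a factor $s^at^b$ with $a,b\geq 1$ and hence vanishes because $st=0$ in $\Z_{st}$, leaving $\det(sX+tY)=s^n\det(X)+t^n\det(Y)$, after which Lemma~\ref{units} is cited. Your second, alternative argument is essentially the paper's (multilinearity in the columns versus expanding the Leibniz sum is an immaterial difference). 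The reduction argument buys you freedom from the determinant bookkeeping and from Lemma~\ref{units} entirely, at the cost of invoking the Chinese Remainder Theorem, i.e.\ the fact that invertibility over $\Z_{st}$ is detected by invertibility over $\Z_s$ and over $\Z_t$; the paper's computation is more pedestrian but self-contained. Your closing remark is also well taken and identifies a genuine (if harmless) imprecision in the paper: Lemma~\ref{units} is stated for $u,v,s,t$ \emph{pairwise} coprime, whereas $\det(X)$ and $\det(Y)$ need not be coprime to each other, so the citation is strictly an overreach; as you observe, the proof of Lemma~\ref{units} never uses $\gcd(u,v)=1$, only $\gcd(s,t)=\gcd(s,v)=\gcd(t,u)=1$, so the application goes through once this is noted.
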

\begin{proof}

Let $s, t$ be coprime natural numbers greater than 1 and $X,Y \in \GL\left(n,\Z_{st}\right)$. Then by the Leibniz formula for the determinant of an $n\times n$ matrix,

\begin{align*} 
\det(sX+tY) &= \sum_{\sigma \in S_n} \left(\sgn(\sigma)\prod_{i=1}^{n} (sX+tY)_{i,\sigma(i)}\right)\\
&=\sum_{\sigma \in S_n} \sgn(\sigma)(sX_{1,\sigma(1)}+tY_{1,\sigma(1)})(sX_{2,\sigma(2)}+tY_{2,\sigma(2)}) \cdots (sX_{n,\sigma(n)}+tY_{n,\sigma(n)})\\
&=\sum_{\sigma \in S_n} \sgn(\sigma)((sX_{1,\sigma(1)}sX_{2,\sigma(2)}\cdots sX_{n,\sigma(n)}) + (tY_{1,\sigma(1)}tY_{2,\sigma(2)}\cdots tY_{n,\sigma(n)})) \\&\text{(expanding the product: every term that is }s^at^bh\text{ for }a,b \geq 1\text{ and some }h\\&\text{ a product of entries of }X\text{ and }Y\text{ is }0\text{ as }s^at^b =0\text{)}\\
&=\sum_{\sigma \in S_n} \left(\sgn(\sigma)\prod_{i=1}^{n} sX_{i,\sigma(i)}+ \sgn(\sigma)\prod_{i=1}^{n} tY_{i,\sigma(i)}\right)\\
&=\sum_{\sigma \in S_n}\left(\sgn(\sigma)\prod_{i=1}^{n} sX_{i,\sigma(i)}\right)+ \sum_{\sigma \in S_n} \left(\sgn(\sigma)\prod_{i=1}^{n} tY_{i,\sigma(i)}\right)\\
&=\det(sX)+\det(tY)\\
&=s^n\det(X) + t^n\det(Y)
\end{align*}
Since $\det(X)$ and $\det(Y)$ are units in $\Z_{st}$, they are coprime to $s$ and $t$ and so, by Lemma \ref{units}, $s^n\det(X) + t^n\det(Y)$ is coprime to $st$ and hence is a unit in $\Z_{st}$. Therefore $sX+tY$ is invertible and is an element of $\GL(n,\Z_{st})$.
\end{proof}

For the remainder of this paper we use $I, 0, I_{r}$ and $0_{r\times s}$ to denote the identity, zero, $r \times r$ identity and $r \times s$ zero matrices respectively. We also use $E_{i,j}$ to denote the matrix with $(i,j)$ entry equal to 1 and every other entry equal to 0. We now obtain a lower bound of $3$ on the diameters of $\Cg(\GL(n,\Z_m))$ and $\Cg(\M(n,\Z_m))$ for an arbitrary $m$ and $n$.

\begin{lemma}
For any $n,m \geq 2$, the matrix $P=I_{n} + \begin{bmatrix} 0_{n-1 \times 1} & I_{n-1} \\ 0 & 0_{1 \times n-1} \end{bmatrix} \in \GL(n,\Z_m)\setminus\centr(\GL(n,\Z_m))$ has the property that $\C_{\M(n,\Z_m)}(P) \cap \C_{\M(n,\Z_m)}(P^T)=\centr(\M(n,\Z_m))$.
\end{lemma}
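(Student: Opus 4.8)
The plan is to reduce the problem to the purely nilpotent part of $P$. Observe that $P = I_n + N$, where $N = \begin{bmatrix} 0_{n-1 \times 1} & I_{n-1} \\ 0 & 0_{1 \times n-1}\end{bmatrix} = \sum_{i=1}^{n-1} E_{i,i+1}$ is the matrix with $1$'s on the superdiagonal and zeros elsewhere. Since $I_n$ commutes with every matrix, a matrix $X \in \M(n,\Z_m)$ satisfies $XP = PX$ if and only if $XN = NX$, and likewise $XP^T = P^T X$ if and only if $XN^T = N^T X$. Hence $\C_{\M(n,\Z_m)}(P) = \C_{\M(n,\Z_m)}(N)$ and $\C_{\M(n,\Z_m)}(P^T) = \C_{\M(n,\Z_m)}(N^T)$, and it suffices to show that the only matrices commuting with both $N$ and $N^T$ are the scalar matrices, which constitute $\centr(\M(n,\Z_m))$.

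First I would compute $\C_{\M(n,\Z_m)}(N)$ directly. Writing $X = (X_{i,j})$, one checks entrywise that $(XN)_{i,j} = X_{i,j-1}$ and $(NX)_{i,j} = X_{i+1,j}$, under the convention that a row index $n+1$ or a column index $0$ contributes the entry $0$. Equating $XN = NX$ then yields two families of relations: the ``bulk'' relations $X_{i,j} = X_{i+1,j+1}$, which force $X$ to be constant along each diagonal (that is, Toeplitz), and the ``boundary'' relations $X_{i+1,1} = 0$ for $1 \leq i \leq n-1$ together with $X_{n,j} = 0$ for $1 \leq j \leq n-1$. Since constancy along diagonals propagates the vanishing of $X_{2,1}, X_{3,1}, \ldots, X_{n,1}$ to every strictly lower diagonal, these relations force $X_{i,j} = 0$ whenever $i > j$. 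Thus $\C_{\M(n,\Z_m)}(N)$ is exactly the set of upper triangular Toeplitz matrices.

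The main point to be careful about is that all of these deductions are \emph{linear} identities among the entries of $X$: at no stage is an entry cancelled or inverted. Consequently the computation, which is standard over a field, goes through verbatim over $\Z_m$ despite the presence of zero divisors, so this is where the potential obstacle dissolves. Applying the identical argument to $N^T$ (or simply transposing) shows that $\C_{\M(n,\Z_m)}(N^T)$ is the set of lower triangular Toeplitz matrices. A matrix lying in both centralisers is therefore simultaneously upper and lower triangular, hence diagonal, and constant along the main diagonal, hence a scalar multiple of $I_n$; since the reverse containment is immediate because scalar matrices are central, this gives $\C_{\M(n,\Z_m)}(P) \cap \C_{\M(n,\Z_m)}(P^T) = \centr(\M(n,\Z_m))$. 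It remains only to note that $P$ is upper triangular with every diagonal entry equal to the unit $1$, so $\det(P) = 1$ and $P \in \GL(n,\Z_m)$, while for $n \geq 2$ its nonzero superdiagonal entry shows that $P$ is non-scalar and hence $P \notin \centr(\GL(n,\Z_m))$.
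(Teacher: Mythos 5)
Your proof is correct and takes essentially the same approach as the paper: a direct entrywise computation of the two centralisers showing that one consists of upper triangular matrices with constant diagonal and the other of their transposes, whose intersection is the scalars. The only (cosmetic but pleasant) difference is that you first strip off the identity and work with the nilpotent Jordan block $N=P-I$, which turns the commutation equations into clean shift/Toeplitz relations rather than the paper's column-by-column bookkeeping; your remark that every deduction is a linear identity, hence valid over $\Z_m$ despite zero divisors, is exactly the right justification.
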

\begin{proof}

Let $P$ be the matrix described in the hypothesis.
Then 
$$P =
\begin{bmatrix}
1 & 1 & 0 & \ldots &  0 \\
0 & 1 & 1 & \ldots & 0 \\
\vdots & \ddots & \ddots &\ddots& \vdots \\
0 & \ldots & 0 & 1 & 1 \\
0 & \ldots & 0 & 0 & 1
\end{bmatrix} \text{ and }P^T =
\begin{bmatrix}
1 & 0 & \ldots & 0 & 0\\
1 & 1 & \ddots & \vdots & \vdots \\
0 & 1 & \ddots & 0 & 0 \\
\vdots & \vdots & \ddots & 1 & 0 \\
0 & 0 & \ldots & 1 & 1 \\
\end{bmatrix}$$
Let $X \in \C_{\M(n,\Z_m)}(P)$. So $X = \begin{bmatrix}
x_{1,1} & x_{1,2} & \ldots & x_{1,n} \\
x_{2,1} & x_{2,2} & \ldots & x_{2,n} \\
\vdots & \vdots & \ddots & \vdots \\
x_{n,1} & x_{n,2} & \ldots & x_{n,n} \\
\end{bmatrix}$ for some $x_{i,j} \in \Z_{m}$.
\begin{align*}
\text{Then }PX &=
\begin{bmatrix}
x_{1,1}+x_{2,1} & x_{1,2}+x_{2,2} & \ldots & x_{1,n}+x_{2,n} \\
x_{2,1}+x_{3,1} & x_{2,2}+x_{3,2} & \ldots & x_{2,n}+x_{3,n} \\
\vdots & \vdots & \ddots & \vdots \\
x_{n-1,1}+x_{n,1} & x_{n-1,2}+x_{n,2} & \ldots & x_{n-1,1n}+x_{n,n} \\
x_{n,1} & x_{n,2} & \ldots & x_{n,n}
\end{bmatrix} \\
\text{and }XP &=
\begin{bmatrix}
x_{1,1} & x_{1,1}+x_{1,2} & \ldots & x_{1,n-1}+x_{1,n} \\
x_{2,1} & x_{2,1}+x_{2,2} & \ldots & x_{2,n-1}+x_{2,n} \\
\vdots & \vdots & \ddots & \vdots \\
x_{n,1} & x_{n,1}+x_{n,2} & \ldots & x_{n,n-1}+x_{n,n}
\end{bmatrix}.
\end{align*}
Since $PX=XP$, we obtain
\[ \begin{array}{ccccc}
x_{1,1}+x_{2,1} = x_{1,1} & 
x_{1,2}+x_{2,2} = x_{1,1}+x_{1,2} &
\ldots &
x_{1,n}+x_{2,n} = x_{1,n-1}+x_{1,n} \\
x_{2,1}+x_{3,1} = x_{2,1} & 
x_{2,2}+x_{3,2} = x_{2,1}+x_{2,2} &
 \ldots & 
 x_{2,n}+x_{3,n} = x_{2,n-1}+x_{2,n}\\
\vdots & \vdots & \ddots & \vdots \\
x_{n,1} = x_{n,1} & 
x_{n,2} = x_{n,1}+x_{n,2} & 
\ldots & 
x_{n,n} = x_{n,n-1}+x_{n,n} \\
\end{array} \]
The left column of equations from $PX=XP$ gives
\begin{align*}
x_{1,1} +x_{2,1}&= x_{1,1}\text{, so } x_{2,1}=0.\\
x_{2,1} +x_{3,1}&= x_{2,1}\text{, so } x_{3,1}=0.\\
&\vdots\\
x_{n-1,1} +x_{n,1}&= x_{n-1,1}\text{, so } x_{n,1}=0.
\end{align*}
The second column gives
\begin{align*}
x_{1,2} +x_{2,2}&= x_{1,1} + x_{1,2}\text{, so }x_{2,2}=x_{1,1}.\\
x_{2,2} +x_{3,2}&= x_{2,1} + x_{2,2}\text{, from above } x_{2,1}=0\text{ and so }x_{3,2}=0.\\
x_{3,2} +x_{4,2}&= x_{3,1} + x_{3,2}\text{, from above } x_{3,1}=0\text{ and so }x_{4,2}=0.\\
&\vdots\\
x_{n-1,2} +x_{n,2}&= x_{n-1,1}+x_{n-1,2}\text{, from above } x_{n-1,1}=0\text{ and so }x_{n,2}=0.\\
\end{align*}
The third column gives $x_{3,3}=x_{2,2}$ and then $x_{k,3} = 0$ for all $k \geq 4$. This continues across the columns and thus $x_{i,i}=x_{1,1}$ for all $i$ and $x_{j,k}=0$ whenever $j > k$. So $X$ has the form
\[
X=\begin{bmatrix}
x_{1,1} & x_{1,2} & \ldots & x_{1,n} 	\\
0 	& x_{1,1}  & \ldots & x_{2,n} 	\\
\vdots 	& \vdots  & \ddots    & \vdots  	\\
0 	& 0 	        & \ldots  & x_{1,1}
\end{bmatrix}
\]

Let $Y \in \C_{\M(n,\Z_m)}(P^T)$ where $Y_{i,j}=y_{i,j}$ for some $y_{i,j}\in \Z_{m}$. By similar arithmetic, $Y$ must have the form
\[
Y=\begin{bmatrix}
y_{1,1} 	& 0 	  		& \ldots 	& 0 		\\
y_{2,1} 	& y_{1,1} 			& \ldots 	& 0 		\\
\vdots 		& \vdots  		& \ddots 	& \vdots	\\
y_{n,1} 	& y_{n,2} 	& \ldots 	& y_{1,1}
\end{bmatrix}
\]
Thus 
\[
\C_{\M(n,\Z_m)}(P) \cap \C_{\M(n,\Z_m)}(P^T)=
\left\{ \begin{array}{c|c} 
\begin{bmatrix} 
s 	& 0 	 & \ldots & 0 		\\
0 	& s  	 & \ldots & 0 		\\
\vdots 	& \vdots & \ddots & \vdots 	\\
0 	& 0	 & \ldots & s
\end{bmatrix}
& s \in \Z_{m}
\end{array} \right\} = \centr(\M(n,\Z_m)).
\qedhere\]
\end{proof}

\begin{corollary}
\label{lowerbound}
For any $m, n \geq 2$, $\diam(\Cg(\GL(n,\Z_m)))$ and $\diam(\Cg(\M(n,\Z_m)))$ are at least $3$.
\end{corollary}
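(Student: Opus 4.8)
The plan is to use the pair $P$ and $P^T$ supplied by the preceding lemma as an explicit pair of vertices realising distance at least $3$, in both $\Cg(\M(n,\Z_m))$ and $\Cg(\GL(n,\Z_m))$. First I would confirm that $P$ and $P^T$ are two genuinely distinct vertices of each graph: the lemma already records $P \in \GL(n,\Z_m)\setminus\centr(\GL(n,\Z_m))$, its transpose $P^T$ is likewise invertible and is not a scalar matrix, hence is non-central, and since $n \geq 2$ the $(1,2)$ entries of $P$ and $P^T$ differ, so $P \neq P^T$. Thus both are vertices of both commuting graphs.

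The core of the argument is to show $\operatorname{d}(P,P^T) \geq 3$, which amounts to ruling out distances $1$ and $2$, and here the lemma does all the work. For distance $1$: if $P$ and $P^T$ commuted, then $P$ would lie in $\C_{\M(n,\Z_m)}(P)\cap\C_{\M(n,\Z_m)}(P^T)$, which the lemma identifies with $\centr(\M(n,\Z_m))$, contradicting that $P$ is non-central; hence $P$ and $P^T$ are non-adjacent. For distance $2$: any common neighbour would be a non-central matrix commuting with both $P$ and $P^T$, that is, an element of $\big(\C_{\M(n,\Z_m)}(P)\cap\C_{\M(n,\Z_m)}(P^T)\big)\setminus\centr(\M(n,\Z_m))$, which the lemma shows to be empty. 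So no common neighbour exists and $\operatorname{d}(P,P^T)\geq 3$ in $\Cg(\M(n,\Z_m))$.

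For the group graph $\Cg(\GL(n,\Z_m))$ the same conclusion follows once I observe that a centraliser in $\GL(n,\Z_m)$ is the intersection of the corresponding centraliser in $\M(n,\Z_m)$ with $\GL(n,\Z_m)$, and that $\centr(\GL(n,\Z_m))$ consists precisely of the invertible scalar matrices, that is, $\centr(\M(n,\Z_m))\cap\GL(n,\Z_m)$. Consequently any common neighbour of $P$ and $P^T$ in $\Cg(\GL(n,\Z_m))$ would lie in $\centr(\M(n,\Z_m))\cap\GL(n,\Z_m)=\centr(\GL(n,\Z_m))$ and so would fail to be a vertex, while the non-commuting argument transfers verbatim. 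Since the existence of a single pair of vertices at distance at least $3$ forces the diameter to be at least $3$ -- and should either graph be disconnected its diameter is $\infty \geq 3$ in any case -- both $\diam(\Cg(\GL(n,\Z_m)))$ and $\diam(\Cg(\M(n,\Z_m)))$ are at least $3$. I expect no real obstacle: the preceding lemma is the substantive input, and the only point needing care is the routine bookkeeping relating centralisers and centres in $\GL(n,\Z_m)$ to those in $\M(n,\Z_m)$.
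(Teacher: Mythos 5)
Your proposal is correct and is precisely the argument the paper intends: the corollary is stated without proof because it follows immediately from the preceding lemma, exactly as you argue, with $P$ and $P^T$ serving as an explicit pair at distance at least $3$ since any common neighbour (or adjacency between them) would force a non-central element into $\C_{\M(n,\Z_m)}(P)\cap\C_{\M(n,\Z_m)}(P^T)=\centr(\M(n,\Z_m))$. Your extra bookkeeping for $\GL(n,\Z_m)$ is sound and fills in what the paper leaves implicit.
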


The following lemmas discern some properties of $\Cg(\GL(n,\Z_m))$ and $\Cg(\M(n,\Z_m))$ when $m$ is a prime power.

\begin{lemma}
\label{aprimepower}
If $p$ is prime and $t \geq 2$ is a natural number then for any $X \in \M(n,\Z_{p^t})$ there exists $Y \in \M(n,\Z_{p^t})$ such that $X$ commutes with $p^{t-1} Y + I$ and $p^{t-1} Y + I \in \GL(n,\Z_{p^t}) \setminus \centr(\M(n,\Z_{p^t})$.
\end{lemma}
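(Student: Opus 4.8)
The plan is to reduce all three requirements on $p^{t-1}Y + I$ to elementary conditions on the reduction of $Y$ modulo $p$, and then to produce a suitable $Y$ by a short case split. First I would dispose of invertibility, which in fact comes for free: since $t \geq 2$, every entry of $p^{t-1}Y$ is divisible by $p$, so $p^{t-1}Y + I$ reduces to $I$ modulo $p$, whence $\det(p^{t-1}Y + I) \equiv 1 \pmod p$. This determinant is therefore coprime to $p$, hence a unit in $\Z_{p^t}$, so $p^{t-1}Y + I \in \GL(n,\Z_{p^t})$ for \emph{every} choice of $Y$.

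Next I would translate the commuting and non-centrality conditions into congruences. Because
\[
X(p^{t-1}Y+I) - (p^{t-1}Y+I)X = p^{t-1}(XY - YX),
\]
and because $p^{t-1}a = 0$ in $\Z_{p^t}$ exactly when $p \mid a$, the matrix $X$ commutes with $p^{t-1}Y + I$ if and only if $XY \equiv YX \pmod p$. Similarly, $p^{t-1}Y + I = sI$ for a scalar $s$ would force $p^{t-1}Y$ to be scalar, which happens precisely when the reduction of $Y$ modulo $p$ is scalar over $\Z_p$; so $p^{t-1}Y + I$ is non-central exactly when $\overline{Y}$ is non-scalar. The statement thus reduces to finding $Y$ whose reduction mod $p$ is non-scalar and commutes with $\overline{X}$ in $\M(n,\Z_p)$.

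I would then finish by a case split on $X$ modulo $p$. If $X$ is not congruent to a scalar matrix mod $p$, I would take $Y = X$: then $X$ commutes with $p^{t-1}X + I$ exactly (not merely modulo $p$), since both products equal $p^{t-1}X^2 + X$, and $p^{t-1}X + I$ is non-scalar because $p^{t-1}X$ is non-scalar. If instead $X \equiv cI \pmod p$ for some $c$, then $\overline{X}$ is central in $\M(n,\Z_p)$ and commutes with everything mod $p$, so I would take $Y = E_{1,2}$; here $p^{t-1}E_{1,2} + I$ has $(1,2)$-entry equal to $p^{t-1} \neq 0$ (using $t \geq 2$), hence is non-scalar, and the commuting condition holds automatically.

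The only real subtlety, and the step to state with care, is this scalar case: taking $Y = X$ there would yield a scalar, hence central, matrix, so the separate choice $Y = E_{1,2}$ is essential to secure non-centrality, and this is exactly where the standing assumption $n \geq 2$ enters (for $n = 1$ there are no non-central elements and the conclusion cannot hold). Everything else is a routine check of the entrywise congruences recorded above.
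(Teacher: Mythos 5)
Your proof is correct and follows essentially the same route as the paper: both produce the witness as $p^{t-1}Y+I$ with $Y$ taken to be $X$ itself when $X$ is sufficiently non-degenerate modulo $p$ and an elementary matrix $E_{i,j}$ otherwise. The only differences are cosmetic --- you split on whether $X$ is scalar rather than diagonal mod $p$, and you obtain invertibility from $\det(p^{t-1}Y+I)\equiv 1 \pmod{p}$ where the paper exhibits the explicit inverse $-p^{t-1}Y+I$.
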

\begin{proof}
Let $p$ be a prime and $t \geq 2$ a natural number. Let $X \in \M(n,\Z_{p^t})$. To find $Y$ we will divide into 2 cases. Firstly, for all $i \neq j$ there exists $u_{i,j} \in \Z_{p^t}$ such that $X_{i,j}=pu_{i,j}$. Secondly, there exist distinct $v, w$ such that $X_{v,w} \neq pu$ for any $u \in \Z_{p^t}$.

Suppose that we have the first case. Then $X$ commutes with $p^{t-1}E_{1,1} + I \in \M(n,\Z_{p^t}) \setminus \centr(\M(n,\Z_{p^t}))$. Further, $(p^{t-1}E_{1,1} + I)((-p^{t-1})E_{1,1} + I)=p^{t-1}(-p^{t-1})E_{1,1}+I=I$, so $p^{t-1}E_{1,1} + I \in \GL(n,\Z_{p^t}) \setminus \centr(\M(n,\Z_{p^t})$ and this case is done.

Now suppose that there exist distinct $v, w$ such that $X_{v,w} \neq pu$ for any $u \in \Z_{p^t}$. Let $A = p^{t-1} X + I\in \M(n,\Z_{p^t})$. Since $p$ is not a factor of $X_{v,w}$, $A_{v,w}=p^{t-1}X_{v,w} \neq 0$ and $v \neq w$ gives that $A$ is not diagonal and thus not a scalar. Now, $A$ clearly commutes with $X$ and $A(-p^{t-1} X + I)=I$ so $A=p^{t-1} X + I \in \GL(n,\Z_{p^t}) \setminus \centr(\M(n,\Z_{p^t})$, this case is also done.
\end{proof}

\begin{lemma}
\label{primepower}
If $p$ is prime and $t \geq 2$ is a natural number, then $\Cg(\M(n,\Z_{p^t}))$ and $\Cg(\GL(n,\Z_{p^t}))$ are connected and $\diam(\Cg(\M(n,\Z_{p^t})))=\diam(\Cg(\GL(n,\Z_{p^t})))=3$.
\end{lemma}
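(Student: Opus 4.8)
The plan is to prove only the upper bound $\diam \leq 3$ for both graphs, since Corollary \ref{lowerbound} already supplies the matching lower bound of $3$ (applied with $m=p^t$), and the two together force equality. The engine of the argument is Lemma \ref{aprimepower}, which attaches to every non-central matrix $X$ a companion of the special shape $p^{t-1}Y+I$ that is simultaneously invertible, non-central, and commuting with $X$. Before using it, I would record the pivotal algebraic fact that \emph{any two} matrices of this shape commute: for $A,B \in \M(n,\Z_{p^t})$,
\[
(p^{t-1}A+I)(p^{t-1}B+I) = p^{2(t-1)}AB + p^{t-1}A + p^{t-1}B + I ,
\]
and since $t \geq 2$ we have $2(t-1)\geq t$, so $p^{2(t-1)} \equiv 0 \pmod{p^t}$ and the cross term vanishes. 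The surviving expression $p^{t-1}A + p^{t-1}B + I$ is symmetric in $A$ and $B$, so the two matrices commute. This is the single place where the hypothesis $t\geq 2$ is genuinely exploited, and it is the crux of the whole lemma.

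With this observation, connectedness and the diameter bound follow uniformly for both graphs. Given distinct non-central vertices $X_1, X_2$ (taken in $\M(n,\Z_{p^t})$ for the ring, or in $\GL(n,\Z_{p^t})$ for the group), I would apply Lemma \ref{aprimepower} to each to obtain $N_i = p^{t-1}Y_i + I$ that is invertible, non-central, and commutes with $X_i$. Each $N_i$ is then a legitimate vertex of \emph{both} commuting graphs, and by the identity above $N_1$ commutes with $N_2$. This yields the walk
\[
X_1 \;-\; N_1 \;-\; N_2 \;-\; X_2
\]
of length at most $3$, so $\operatorname{d}(X_1,X_2)\leq 3$. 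Since $N_1,N_2\in\GL(n,\Z_{p^t})$, the very same path lives inside $\Cg(\GL(n,\Z_{p^t}))$, so no separate treatment of the group is required.

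Finally I would assemble the bounds: the displayed walk shows both graphs are connected with diameter at most $3$, while Corollary \ref{lowerbound} furnishes a pair of vertices at distance at least $3$, giving $\diam(\Cg(\M(n,\Z_{p^t}))) = \diam(\Cg(\GL(n,\Z_{p^t}))) = 3$. I expect the genuinely substantive content to be the commuting identity, which collapses the entire connectivity question onto the single common "layer" of matrices $p^{t-1}Y+I$; the only bookkeeping worth a remark is the possibility that consecutive entries of the walk coincide, but this is harmless since in a graph any walk between two vertices contains a path of no greater length, so the distance bound of $3$ persists.
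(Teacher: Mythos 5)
Your proposal is correct and follows essentially the same route as the paper: apply Lemma \ref{aprimepower} to each endpoint to obtain invertible non-central neighbours of the form $p^{t-1}Y+I$, observe that any two such matrices commute because $p^{2(t-1)}\equiv 0 \pmod{p^t}$ when $t\geq 2$, and combine the resulting length-$3$ path with Corollary \ref{lowerbound}. Your explicit remark that a walk with possibly repeated vertices still bounds the distance is a small tidiness improvement over the paper's write-up, but the substance is identical.
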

\begin{proof}
Let $p$ be a prime and $t \geq 2$ a natural number. Let $X, Y$ be arbitrary vertices in $\Cg(\M(n,\Z_{p^t}))$. By Lemma \ref{aprimepower}, there exist $A, B \in \M(n,\Z_{p^t})$ such that $p^{t-1} A + I \in \C_{\M(n,\Z_{p^t})}(X) \cap \GL(n,\Z_{p^t}) \setminus \centr(\M(n,\Z_{p^t}))$ and $p^{t-1} B + I \in \C_{\M(n,\Z_{p^t})}(Y) \cap \GL(n,\Z_{p^t}) \setminus \centr(\M(n,\Z_{p^t}))$. Now, 
$(p^{t-1} A + I)(p^{t-1} B + I)=
p^{t-1}p^{t-1}AB+p^{t-1} AI+p^{t-1} IB+ I=
p^{t}p^{t-2}AB+p^{t-1} AI+p^{t-1} IB+ I=
0+ p^{t-1}IA+p^{t-1}BI+I=
p^{t-1}p^{t-1}BA+p^{t-1}IA+p^{t-1}BI+I=
(p^{t-1} B + I)(p^{t-1} A + I)$. So $X\sim p^{t-1} A + I\sim p^{t-1} B + I\sim Y$ is a path of invertible matrices of length $3$ between $X$ and $Y$ in $\Cg(\M(n,\Z_{p^t}))$, if $X, Y \in \GL(n,\Z_{p^t})$ then this path is also in $\Cg(\GL(n,\Z_{p^t}))$. Therefore $\Cg(\M(n,\Z_{p^t}))$ and $\Cg(\GL(n,\Z_{p^t}))$ are connected and,  from the lower bound given by Corollary \ref{lowerbound}, $\diam(\Cg(\M(n,\Z_{p^t})))=\diam(\Cg(\GL(n,\Z_{p^t})))=3$.
\end{proof}

Now we obtain some lemmas on the nature of $\Cg(\GL(n,\Z_m))$ and $\Cg(\M(n,\Z_m))$ when $m$ is the product of two coprime factors.

\begin{lemma}
\label{amulticomp}
If $\A, \B$ are coprime natural numbers greater than 1 then for any $X \in \M(n,\Z_{\A\B})$ there exist $Y \in \M(n,\Z_{\A\B})$ and $k \in \Z_{\A\B}$ such that $\A Y + kI \in \C_{\M(n,\Z_{\A\B})}(X) \setminus \centr(\M(n,\Z_{\A\B}))$. Moreover, if $X \in \GL(n,\Z_{\A\B})$, then $Y$ and $k$ can be chosen so that $\A Y + kI \in \GL(n,\Z_{\A\B})$.
\end{lemma}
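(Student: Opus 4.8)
The plan is to exploit the coprimality of $\A$ and $\B$ by working modulo $\A$ and modulo $\B$ separately, via the isomorphism $\Z_{\A\B}\cong\Z_\A\times\Z_\B$. The guiding observation is that a matrix of the prescribed shape $\A Y+kI$ reduces to the \emph{scalar} matrix $kI$ modulo $\A$ (since $\A Y\equiv 0$), whereas modulo $\B$, because $\gcd(\A,\B)=1$ makes $\A$ a unit, $\A Y$ ranges over the whole centraliser of $\overline{X}:=X\bmod\B$ as $Y$ ranges over the matrices commuting with $\overline{X}$. Thus non-centrality and invertibility must both be arranged in the $\B$-component, while the $\A$-component only ever contributes a scalar.

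First I would record three elementary reformulations, obtained by comparing $(\A Y+kI)X$ with $X(\A Y+kI)$ and by inspecting determinants: the matrix $\A Y+kI$ commutes with $X$ if and only if $XY\equiv YX\pmod\B$; it is non-scalar, hence non-central, if and only if $Y$ is non-scalar modulo $\B$; and it is invertible if and only if $k$ is a unit modulo $\A$ and $\A Y+kI$ is invertible modulo $\B$. Each of these uses only that $\A w\equiv 0\pmod{\A\B}$ exactly when $w\equiv 0\pmod\B$, together with the fact that a determinant is a unit in $\Z_{\A\B}$ if and only if it is a unit modulo $\A$ and modulo $\B$.

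Then I would split on whether $\overline{X}$ is scalar. If $\overline{X}$ is non-scalar, set $k=1$ and choose $Y$ (by the Chinese Remainder Theorem, entrywise) with $Y\equiv 0\pmod\A$ and $Y\equiv\A^{-1}(X-I)\pmod\B$, so that $\A Y+I\equiv I\pmod\A$ and $\A Y+I\equiv X\pmod\B$; this matrix commutes with $X$, is non-scalar modulo $\B$, and, in the invertible case, is invertible because $I$ is invertible modulo $\A$ and $X$ is invertible modulo $\B$. If instead $\overline{X}$ is scalar, then every matrix commutes with $\overline{X}$, so I would target $I+E_{1,2}$ modulo $\B$: take $k=1$ and $Y\equiv 0\pmod\A$, $Y\equiv\A^{-1}E_{1,2}\pmod\B$, giving $\A Y+I\equiv I\pmod\A$ and $\A Y+I\equiv I+E_{1,2}\pmod\B$, which is non-scalar and invertible (determinant $1$) modulo $\B$. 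For the first, non-invertible assertion the same two constructions already furnish a non-central element of $\C_{\M(n,\Z_{\A\B})}(X)\setminus\centr(\M(n,\Z_{\A\B}))$, so no further work is needed there.

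The main obstacle is the invertibility clause. Because $\A Y+kI$ is forced to reduce to the scalar $kI$ modulo $\A$, invertibility in that component is recoverable only by insisting that $k$ be a unit modulo $\A$, and invertibility modulo $\B$ must be supplied entirely by an invertible non-scalar element of the centraliser of $\overline{X}$. When $\overline{X}$ is non-scalar this element is $\overline{X}$ itself, invertible precisely because $X\in\GL(n,\Z_{\A\B})$; but when $\overline{X}$ is scalar its centraliser is everything and one must exhibit a concrete invertible non-scalar matrix such as $I+E_{1,2}$. Handling this degenerate case correctly, rather than the otherwise routine bookkeeping, is the only genuinely delicate point.
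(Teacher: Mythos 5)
Your proof is correct, but it takes a genuinely different route from the paper's. The paper argues entrywise: it splits on whether every off-diagonal entry of $X$ is a multiple of $t$ (i.e.\ whether $X$ is diagonal modulo $t$), and in the invertible case further splits off $t=2$ as a special sub-case, producing explicit commuting matrices such as $sE_{1,1}+I$, $s(E_{1,n}+I)+tI$, $s(I-2E_{1,1})+tI$ and $sX+tI$, verifying commutativity by direct matrix multiplication and invertibility via its Lemma~\ref{dets} on $\det(sX+tY)=s^n\det(X)+t^n\det(Y)$. You instead pass through $\Z_{st}\cong\Z_s\times\Z_t$, observe that $sY+kI$ is forced to be the scalar $kI$ modulo $s$ while its reduction modulo $t$ can be any element of the centraliser of $X\bmod t$, and split on whether $X$ is scalar modulo $t$ (a coarser and more natural dichotomy than the paper's ``diagonal modulo $t$''). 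This collapses the paper's three invertible cases to two, eliminates the $t=2$ special case entirely (the paper needs it only because its chosen witness $s(I-2E_{1,1})+tI$ degenerates to a scalar when $2s\equiv 0$), and replaces Lemma~\ref{dets} by the immediate fact that a determinant is a unit in $\Z_{st}$ if and only if it is a unit modulo $s$ and modulo $t$. What the paper's version buys is complete self-containment at the level of explicit matrix arithmetic; what yours buys is brevity and a conceptual explanation of \emph{why} the witness must be scalar in the $s$-component and why all the content lives modulo $t$. All the reductions you state (commuting with $X$ iff $XY\equiv YX\pmod t$, non-central iff $Y$ is non-scalar modulo $t$, invertible iff $k$ is a unit modulo $s$ and the reduction modulo $t$ is invertible) check out, and both of your explicit choices of $Y$ satisfy them.
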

\begin{proof}
Let $\A, \B$ be coprime natural numbers greater than 1. Let $X \in \M(n,\Z_{\A\B})$. Firstly assume that for all $i \neq j$ there exists $u_{i,j} \in \Z_{\A\B}$ such that $X_{i,j}=\B u_{i,j}$. Then $X$ commutes with $\A E_{1,1} + I \in \M(n,\Z_{\A\B}) \setminus \centr(\M(n,\Z_{\A\B}))$. 
Now assume there exist distinct $v, w$ such that $X_{v,w} \neq \B u$ for any $u \in \Z_{\A\B}$. Then $X$ commutes with $\A X + I \in \M(n,\Z_{\A\B})$. Moreover, $X_{v,w}$ is not a multiple of $\B$ and so when multiplied by $\A$ does not give zero. Since $v \neq w$, this gives that $\A X + I$ is not diagonal and thus nonscalar. This covers the first part of the lemma.

Now let $X$ be invertible. To find appropriate $Y$ and $k$, we will divide into 3 cases: (1) $\B = 2$ and for all $i \neq j$ there exists $u_{i,j} \in \Z_{\A\B}$ such that $X_{i,j}=\B u_{i,j}$. (2) $\B \neq 2 $ and for all $i \neq j$ there exists $u_{i,j} \in \Z_{\A\B}$ such that $X_{i,j}=\B u_{i,j}$. (3) There exist distinct $v, w$ such that $X_{v,w} \neq \B u$ for any $u \in \Z_{\A\B}$.

Case 1: Here $\B=2$ and $X$ is of the form
$$X = \begin{bmatrix}
x_{1,1} & \B u_{1,2} & \ldots & \B u_{1,n} \\
\B u_{2,1} & x_{2,2} & \ldots & \B u_{2,n} \\
\vdots & \vdots & \ddots & \vdots \\
\B u_{n,1} & \B u_{n,2} & \ldots & x_{n,n} \\
\end{bmatrix}\text{ for some }x_{i,i}, u_{i,j} \in \Z_{\A\B}.$$
Consider that $\det(X)$ is a sum of multiples of permutations of $n$ entries of $X$ with precisely one entry from each row and column. All of the terms in the summation will have a factor of $\B$ in them except for the $x_{1,1}x_{2,2}\cdots x_{n,n}=\tr(X)$ term. If one of $x_{i,i}$ were a multiple of $\B$ then so would this term, and $\det(X)=z\B$ for some $z$. This is not a unit in $\Z_{\A\B}$ so any such $X$ is not invertible. Therefore, all $x_{i,i}$ coprime to $\B$, that is, are odd. As $\B=2$, $z\A \equiv 0\pmod{\A\B}$ if $z$ is even and $z\A\equiv\A\pmod{\A\B}$ if $z$ is odd. So $\A x_{i,i}=\A$ for all $i$. Let $A = \A (E_{1,n}+I) + \B I \in \M(n,\Z_{\A\B})$. Then $A$ is invertible by Lemma \ref{dets}. Since $A$ is also not a scalar, $A \in \GL(n,\Z_{\A\B})\setminus \centr(\GL(n,\Z_{\A\B}))$. Now \begin{align*}
AX
=&
 \begin{bmatrix}
(\A+\B) x_{1,1} +\A\B u_{n,1} &  (\A+\B) \B u_{1,2} +\A\B u_{n,2}& \ldots &(\A+\B) \B u_{1,n}+\A x_{n,n} \\
(\A+\B)\B u_{2,1} & (\A+\B)x_{2,2} & \ldots &  (\A+\B)\B u_{2,n} \\
\vdots & \vdots & \ddots  & \vdots \\
(\A+\B)\B u_{n,1} & (\A+\B)\B u_{n,2} &  \ldots &(\A+\B)x_{n,n}
\end{bmatrix}\\
=&
\begin{bmatrix}
(\A+\B) x_{1,1} &  (\A+\B)\B u_{1,2} &   \ldots &   (\A+\B) \B u_{1,n} + \A \\
(\A+\B)\B u_{2,1} & (\A+\B)x_{2,2} &  \ldots &   (\A+\B)\B u_{2,n} \\
\vdots & \vdots &  \ddots & \vdots  \\
(\A+\B)\B u_{n,1} & (\A+\B)\B u_{n,2} & \ldots & (\A+\B)x_{n,n}
\end{bmatrix}\\
=&
\begin{bmatrix}
(\A+\B) x_{1,1} &  (\A+\B)\B u_{1,2} &   \ldots &   (\A+\B) \B u_{1,n}+ \A x_{1,1} \\
(\A+\B)\B u_{2,1} & (\A+\B)x_{2,2} &  \ldots &   (\A+\B)\B u_{2,n} + \A\B u_{2,1} \\
\vdots & \vdots &   \ddots & \vdots  \\
(\A+\B)\B u_{n,1} & (\A+\B)\B u_{n,2} & \ldots & (\A+\B)x_{n,n}+ \A\B u_{n,1}
\end{bmatrix}\\
=&XA
\end{align*} So $A \in \C_{\GL(n,\Z_{\A\B})}(X) \setminus \centr(\GL(n,\Z_{\A\B}))$ and this case is done.

Case 2: Here $\B \neq 2$ and for all $i \neq j$ there exists $u_{i,j} \in \Z_{\A\B}$ such that $X_{i,j}=\B u_{i,j}$. Let $A=\A(I-2E_{1,1}) + \B I \in \M(n,\Z_{\A\B})$. Then $A$ is invertible by Lemma \ref{dets}. Since $\B \neq 2$, we have $\A \neq -\A$ and so $A_{1,1}=-\A+\B\neq \A+\B=A_{2,2}$. Thus $A$ is not a scalar. Writing $X_{i,i}=x_{i,i}$ and $X_{i,j}=t u_{i,j}$ for $i \neq j$ as in the previous case, 
\begin{align*}
AX
=&
 \begin{bmatrix}
(-\A+\B) x_{1,1} &  (-\A+\B) \B u_{1,2} &   \ldots &  (-\A+\B) \B u_{1,n} \\
(\A+\B)\B u_{2,1} & (\A+\B)x_{2,2} &  \ldots &  (\A+\B)\B u_{2,n} \\
\vdots & \vdots &   \ddots & \vdots \\
(\A+\B)\B u_{n,1} & (\A+\B)\B u_{n,2} &   \ldots & (\A+\B)x_{n,n}
\end{bmatrix}\\
=&
\begin{bmatrix}
(-\A+\B) x_{1,1} &  \B^2 u_{1,2} &  \ldots &  \B^2 u_{1,n} \\
\B^2 u_{2,1} & (\A+\B)x_{2,2} &  \ldots &  \B^2 u_{2,n} \\
\vdots & \vdots &  \ddots & \vdots \\
\B^2 u_{n,1} & \B^2 u_{n,2} &  \ldots & (\A+\B)x_{n,n}
\end{bmatrix}\\
=&
\begin{bmatrix}
(-\A+\B) x_{1,1} &  (\A+\B) \B u_{1,2} &  \ldots &  (\A+\B) \B u_{1,n} \\
(-\A+\B)\B u_{2,1} & (\A+\B)x_{2,2} &  \ldots &  (\A+\B)\B u_{2,n} \\
\vdots & \vdots &   \ddots & \vdots \\
(-\A+\B)\B u_{n,1} & (\A+\B)\B u_{n,2} &  \ldots & (\A+\B)x_{n,n}
\end{bmatrix}\\
=&XA
\end{align*}So $A \in \C_{\GL(n,\Z_{\A\B})}(X) \setminus \centr(\GL(n,\Z_{\A\B}))$ and this case is done.

Case 3: Now there exist distinct $v, w$ such that $X_{v,w} \neq \B u$ for any $u \in \Z_{\A\B}$. Let $A = \A X + \B I \in \M(n,\Z_{\A\B})$, which is invertible by Lemma \ref{dets}. Now $A_{v,w}=\A X_{v,w} \neq 0$ as $X_{v,w}$ is not a multiple of $\B$. Since $v \neq w$, $A$ is nonscalar. Then $A$ clearly commutes with $X$ and so $A \in \C_{\GL(n,\Z_{\A\B})}(X) \setminus \centr(\GL(n,\Z_{\A\B}))$ and the lemma is proved.
\end{proof}

\begin{lemma}
\label{multicomp}
If $\A, \B$ are coprime natural numbers greater than $1$, then $\Cg(\GL(n,\Z_{\A\B}))$ and $\Cg(\M(n,\Z_{\A\B}))$ are connected and $\diam(\Cg(\GL(n,\Z_{\A\B})))=\diam(\Cg(\M(n,\Z_{\A\B})))=3$.
\end{lemma}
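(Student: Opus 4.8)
The plan is to mirror the proof of Lemma~\ref{primepower}, replacing Lemma~\ref{aprimepower} by Lemma~\ref{amulticomp}. The first observation I would record is that Lemma~\ref{amulticomp} is symmetric in $\A$ and $\B$: its hypotheses only ask that $\A,\B$ be coprime and exceed $1$, and $\A\B=\B\A$, so interchanging the two roles shows that for any $X\in\M(n,\Z_{\A\B})$ there also exist $Y'\in\M(n,\Z_{\A\B})$ and $k'\in\Z_{\A\B}$ with $\B Y'+k'I\in\C_{\M(n,\Z_{\A\B})}(X)\setminus\centr(\M(n,\Z_{\A\B}))$, and with $\B Y'+k'I\in\GL(n,\Z_{\A\B})$ whenever $X$ is invertible. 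Having both an ``$\A$-type'' and a ``$\B$-type'' centralising element available is exactly what makes the middle edge of the path free.

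Given arbitrary vertices $X_1,X_2$ of $\Cg(\M(n,\Z_{\A\B}))$, I would apply Lemma~\ref{amulticomp} to $X_1$ to obtain a non-central $Z_1=\A Y_1+k_1 I\in\C_{\M(n,\Z_{\A\B})}(X_1)$, and apply its $\B$-version to $X_2$ to obtain a non-central $Z_2=\B Y_2+k_2 I\in\C_{\M(n,\Z_{\A\B})}(X_2)$. The key step is to check that $Z_1$ and $Z_2$ commute with \emph{each other}. Expanding the product gives $Z_1 Z_2-Z_2 Z_1=\A\B(Y_1 Y_2-Y_2 Y_1)$, and since $\A\B\equiv 0\pmod{\A\B}$ this difference vanishes, so $Z_1 Z_2=Z_2 Z_1$. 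Hence $X_1\sim Z_1\sim Z_2\sim X_2$ is a walk of length~$3$ whose consecutive matrices commute; collapsing it if any consecutive vertices coincide, we conclude $\operatorname{d}(X_1,X_2)\le 3$. When $X_1,X_2\in\GL(n,\Z_{\A\B})$, the invertibility clauses of Lemma~\ref{amulticomp} make $Z_1,Z_2$ invertible, so the same walk lies in $\Cg(\GL(n,\Z_{\A\B}))$.

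This shows both graphs are connected with diameter at most~$3$, and combining with the lower bound of~$3$ from Corollary~\ref{lowerbound} forces $\diam(\Cg(\GL(n,\Z_{\A\B})))=\diam(\Cg(\M(n,\Z_{\A\B})))=3$. I expect no genuine obstacle at this stage, since all the casework has already been discharged inside Lemma~\ref{amulticomp}; the only real idea is to attach $X_1$ through an $\A$-type centralising element and $X_2$ through a $\B$-type one, so that the connecting edge $Z_1\sim Z_2$ is supplied automatically by the relation $\A\B=0$ in $\Z_{\A\B}$.
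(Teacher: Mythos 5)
Your proposal is correct and follows essentially the same route as the paper: attach $X_1$ to an $\A$-type centraliser element and $X_2$ to a $\B$-type one via Lemma~\ref{amulticomp} (using its evident symmetry in $\A$ and $\B$), observe that the two middle vertices commute because $\A\B=0$ in $\Z_{\A\B}$, and invoke Corollary~\ref{lowerbound} for the matching lower bound. The paper's proof expands the product $(\A A+kI)(\B B+\ell I)$ explicitly rather than writing the commutator as $\A\B(Y_1Y_2-Y_2Y_1)$, but this is the same computation.
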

\begin{proof}
Let $\A, \B$ be coprime natural numbers greater than 1. Let $X, Y$ be arbitrary vertices in $\Cg(\M(n,\Z_{m}))$. By Lemma \ref{amulticomp}, there exist $A, B \in \M(n,\Z_{m})$ and $k, \ell \in \Z_{m}$ such that $\A A + kI \in \C_{\M(n,\Z_{m})}(X) \setminus \centr(\M(n,\Z_{m}))$ and $\B B + \ell I \in \C_{\M(n,\Z_{m})}(Y) \setminus \centr(\M(n,\Z_{m}))$. Now, 
$(\A A + kI)(\B B + \ell I)=
\A\B AB+\A \ell AI+k\B IB+k\ell I=
0+\ell\A IA+\B kBI +\ell kI=
\B\A BA+\ell\A IA+\B kBI +\ell kI=
(\B B + \ell I)(\A A + kI)$. So $X\sim \A A + kI\sim \B B + \ell I\sim Y$ is a path of length $3$ between $X$ and $Y$. Therefore $\Cg(\M(n,\Z_{m}))$ is connected and, with Corollary \ref{lowerbound}, $\diam(\Cg(\M(n,\Z_{m})))=3$

Moreover,  if $X, Y \in \Cg(\GL(n,\Z_{m}))$ then, by Lemma \ref{amulticomp}, $A, B, k$ and $\ell$ can be chosen such that $\A A + kI A, \B B + \ell I B \in \GL(n,\Z_{m})$. Thus intermediate vertices on the path given above can be replaced with invertible ones, and so $\Cg(\GL(n,\Z_{m}))$ is connected and, using Corollary \ref{lowerbound}, $\diam(\Cg(\GL(n,\Z_{m})))=3$.
\end{proof}

Since any composite integer is either a prime power or a product of two coprime factors, the proof of Theorem~\ref{composite} is concluded by combining Lemmas~\ref{primepower} and~\ref{multicomp}.

\end{document}